\numberwithin{equation}{section}
\theoremstyle{plain}
\newtheorem{theorem}{Theorem}[section]
\newtheorem{lemma}[theorem]{Lemma}
\newtheorem*{conjecture}{Conjecture}
\theoremstyle{definition}
\newtheorem{remark}[theorem]{Remark}
\newcommand\backmatter{\appendix
\def\chaptermark##1{\markboth{%
\ifnum  \c@secnumdepth > \m@ne  \@chapapp\ \thechapter:  \fi  ##1}{%
\ifnum  \c@secnumdepth > \m@ne  \@chapapp\ \thechapter:  \fi  ##1}}%
\def\sectionmark##1{\relax}}
\def \no{\nonumber}
\def \pa{\partial}
\def\e{\varepsilon}
\def\R{\mathbb{R}}
\def\Rn{{\mathbb{R}}^n_+}
\def\D{\Delta}
\def\ba{\begin{align}}
\def\ea{\end{align}}
\def\bp{\begin{proof}}
\def\ep{\end{proof}}
\def\func:u{\bar{u}_{(x_0,\e)}}
\newcommand{\ud}{\mathrm{d}}
\begin{document}

\title{\Large \bf An existence theorem on the isoperimetric ratio over scalar-flat conformal classes}
\author{Xuezhang  Chen\thanks{X. Chen is partially supported by NSFC (No.11771204), A Foundation for the Author of National Excellent Doctoral Dissertation of China (No.201417) and start-up grant of 2016 Deng Feng program B at Nanjing University. Email: xuezhangchen@nju.edu.cn.}, ~~Tianling Jin\thanks{T. Jin is partially supported by Hong Kong RGC grant GRF 16306918. Email: tianlingjin@ust.hk.} ~~and Yuping Ruan\thanks{Email: ruanyp@umich.edu.}
\medskip
\\
 \small
$^\ast$$^\ddag$Department of Mathematics \& IMS, Nanjing University, Nanjing 210093, P. R. China.\\
 \small$^\dag$Department of Mathematics, Hong Kong University of Science and Technology, \\
  \small Clear Water Bay, Kowloon, Hong Kong. \\
  \small $^\ddag$Department of  Mathematics, University of Michigan, Ann Arbor, MI 48109, USA.\\
}

\date{}

\maketitle

\begin{abstract}
Let $(M,g)$ be a smooth compact Riemannian manifold of dimension $n$ with smooth boundary $\partial M$,  admitting a scalar-flat conformal metric. We prove that the supremum of the isoperimetric ratio  over the scalar-flat conformal class is strictly larger than the best constant of the isoperimetric inequality in the Euclidean space, and consequently is achieved, if either (i) $9\le n\le 11$ and $\partial  M$ has a nonumbilic point; or (ii) $7\le n\le 9$, $\partial M$ is umbilic and the Weyl tensor does not vanish identically on the boundary. This is a continuation of the work \cite{Jin-Xiong} by the second named author and Xiong. 

\medskip

{\bf Keywords: }  Conformal geometry; isoperimetric inequality.

\medskip

{\bf MSC2010: } 53C21 (35J60 45G10 58J60)
\end{abstract}

\section{Introduction}

Let $(M,g)$ be a smooth compact Riemannian manifold of dimension $n\ge 3$ with boundary $\pa M$. In \cite{Hang-Wang-Yan}, F. Hang, X. Wang and X. Yan initiated a study of the isoperimetric quotient  over the scalar-flat conformal class of $g$ on $M$:
\begin{align}\label{theta}
\Theta(M,g)=\sup\left\{\frac{\mathrm{Vol}(M,\tilde g)^\frac{1}{n}}{\mathrm{Vol}(\pa M,\tilde g)^\frac{1}{n-1}}:\  \tilde{g} \in [g]\mathrm{~~with~~} R_{\tilde{g}}=0\right\},
\end{align}
where $[g]=\{\rho^2g: \rho\in C^\infty(M),\rho>0\}$ is the conformal class of $g$, and $R_{g}$ is the scalar curvature of $(M,g)$.  

It was explained in \cite{Hang-Wang-Yan} that the set $\{\tilde{g} \in [g]: R_{\tilde{g}}=0\}$ is not empty if and only if the first eigenvalue $\lambda_1(L_g)$ of the conformal Laplacian
$
L_g:= -\Delta_g +\frac{n-2}{4(n-1)}R_g
$
with zero Dirichlet boundary condition is  positive. Note that the positivity of $\lambda_1(L_g)$ does not depend on the choice of the metrics in $[g]$. Assuming $\lambda_1(L_g)>0$, they proved in \cite{Hang-Wang-Yan} that
  \begin{equation*}
\Theta(\overline {B_1},g_{\R^n})\le \Theta(M,g)<\infty,
\end{equation*}
 and $\Theta(\overline {B_1},g_{\R^n})$ coincides with the best constant of the isoperimetric inequality in the Euclidean space, that is,
 \[
\Theta(\overline {B_1},g_{\R^n})= n^{-\frac{1}{n}}\omega_{n-1}^{-\frac{1}{n(n-1)}},
\]
where $\omega_{n-1}$ is the volume of the unit sphere $\mathbb{S}^{n-1}$. 
They also showed in \cite{Hang-Wang-Yan} that $\Theta(M,g)$ is achieved if the strict inequality
  \begin{equation} \label{HWY:conj}
\Theta(\overline {B_1},g_{\R^n}) < \Theta(M,g)
\end{equation}
  holds, and made a conjecture that:

\begin{conjecture}[\cite{Hang-Wang-Yan}]\label{conjecture} Assume $n\ge 3$, $(M,g)$ is a  smooth compact Riemannian manifold of dimension $n$ with nonempty smooth boundary $\partial M$, and $\lambda_1(L_g)>0$. If $(M,g)$ is not conformally diffeomorphic to $(\overline {B_1}, g_{\R^n})$, then the strict inequality  \eqref{HWY:conj} holds.

\end{conjecture}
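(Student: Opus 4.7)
The plan is to verify the strict inequality \eqref{HWY:conj} under the two geometric regimes (i) and (ii) announced in the abstract; together with the existence criterion of \cite{Hang-Wang-Yan} recalled above, this will immediately deliver the maximizer. I would first recast $\Theta(M,g)$ variationally: writing $\tilde g = u^{4/(n-2)}g$ with $u>0$, the constraint $R_{\tilde g}=0$ is equivalent to $L_g u=0$ in $M$, and the isoperimetric ratio transforms, up to universal constants, into a quotient of $\|u\|_{L^{\crit}(M)}$ and $\|u\|_{L^{\critbordo}(\pa M)}$ subject to that PDE constraint. This is an Escobar-type boundary Yamabe problem, for which an Aubin--Schoen style test function construction concentrated at a boundary point is the natural device.

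Concretely, I would pick a boundary point $x_0 \in \pa M$ at which the relevant local invariant is nonzero (a non-umbilic point in case (i); a point with $W_g|_{\pa M}(x_0)\neq 0$ in case (ii)), work in conformal Fermi coordinates of Marques type centered at $x_0$ so that $H_g$ and the determinant of $g$ are normalized to high order along $\pa M$, and take as test function
\begin{equation*}
v_\e(x) = \eta(x)\, U_{(x_0,\e)}(x) + \e^{(n-2)/2}\phi_\e(x),
\end{equation*}
where $U_{(x_0,\e)}(x) = \e^{(n-2)/2}\bigl[(\e+x_n)^2+|x'-x_0'|^2\bigr]^{-(n-2)/2}$ is the half-space bubble extremizing the sharp Sobolev trace inequality, $\eta$ is a cutoff supported in a small coordinate half-ball around $x_0$, and $\phi_\e$ is an explicit correction constructed so that $L_g v_\e$ and its Neumann-type boundary derivative are small in integral norm.

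Next I would expand, as $\e \to 0$, both $\|v_\e\|_{L^{\crit}(M)}^{2(n-1)}$ and $\|v_\e\|_{L^{\critbordo}(\pa M)}^{n}$. After subtracting the flat contribution (which reproduces $\Theta(\overline{B_1},g_{\R^n})^{n(n-1)}$), the first nontrivial correction is governed by the umbilicity tensor $|\pi_g|_g^2(x_0)$ in case (i) and by $|W_g|^2_{\pa M}(x_0)$ in case (ii); the characteristic rate in $\e$ is $\e^2$ (resp.\ $\e^4$), with a logarithmic factor $\log\frac{1}{\e}$ appearing at the borderline dimensions inside each window. A careful sign computation then shows that, in the stated dimensional ranges, the correction pushes the ratio strictly \emph{above} $\Theta(\overline{B_1},g_{\R^n})$, giving \eqref{HWY:conj} for the metric $v_\e^{4/(n-2)}g$ with $\e$ small.

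The main obstacle will be the bookkeeping of this expansion. Several error contributions appear at the same order as the targeted geometric term: from the cutoff transition annulus, from the correction $\phi_\e$ (whose own existence and pointwise size require a quantitative linear theory in weighted H\"older spaces), from higher-order expansions of $g$ in Fermi coordinates, and from cross-interactions between interior and boundary integrals. The Marques-type normalization is what makes the targeted term emerge unambiguously; without it, positive spurious terms of the same order would obstruct the sign. The dimensional restrictions then reflect the window in which the chosen local invariant concentrates strongly enough at the bubble scale to beat the residual terms: below the window the bubble is too delocalized for the local geometry to register, and above it a different global obstruction --- conjecturally tied to a positive mass theorem for an associated asymptotically flat manifold --- would have to take over and require a separate argument outside the scope of (i) and (ii).
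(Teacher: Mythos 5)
Your outline reproduces the general Aubin--Escobar test-function strategy, but it misses the two points on which the argument actually turns. First, the admissible class in \eqref{theta} consists of metrics $u^{4/(n-2)}g$ with $L_gu=0$ \emph{exactly}; equivalently, in the formulation \eqref{eq:energyinf} the only free datum is the boundary function $v$, and the interior function is forced to be its Poisson extension $P_gv$. A test function $v_\e=\eta U_{(x_0,\e)}+\e^{(n-2)/2}\phi_\e$ with $L_gv_\e$ merely ``small in integral norm'' is not admissible, and you cannot simply discard the discrepancy: the difference between the approximate extension and the exact one contributes at the same order $\e^2$ (resp.\ $\e^4$) as the geometric term you are trying to isolate. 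This is why the paper takes $v_\e$ to be only the cut-off bubble on $\pa M$, forms the exact extension $u_\e=P_gv_\e$, and expends most of its effort on the error $W_\e=u_\e-\overline U_\e$ via the three-term decomposition \eqref{JX-error decomposition}, whose quadratic contribution $\int\overline U_\e^{4/(n-2)}W_\e^2$ is one of the leading-order terms, not a remainder.

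Second, and decisively, the step you describe as ``a careful sign computation then shows\ldots'' is precisely where the claimed dimension ranges cannot be reached by the scheme you propose. The coefficient of $\e^2|h|^2$ is the constant $C_1(n)$ of \eqref{eq:C1}: an explicit term proportional to $(n-12)$, which is \emph{negative} throughout your window $9\le n\le 11$, plus two positive integrals involving a function $V$ defined only implicitly as the solution of \eqref{eq:subsolution1} (similarly $C_2(n)$ in \eqref{eq:C2} with the factor $(n-10)$ and the function $\Lambda$ of \eqref{eq:subsolution2}). With only qualitative positivity of $V$ and $\Lambda$ one recovers exactly the thresholds $n\ge 12$ and $n\ge 10$ of \cite{Jin-Xiong}; to beat them one needs quantitative lower bounds on these implicit functions. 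The paper's new input is the construction of explicit subsolutions of \eqref{eq:subsolution1} and \eqref{eq:subsolution2} in $\R^{n+4}_+$ with profiles $f(x_{n+4})|x+e_{n+4}|^{-\alpha}$, obtained from the ODE reduction in Lemma \ref{lem:calculus} together with the duality identity of Lemma \ref{lem:changeofvariables}, which converts the resulting estimates into computable Beta-function expressions and yields $C_1(n)>0$ for $n\ge 9$ and $C_2(n)>0$ for $n\ge 7$. Nothing in your proposal supplies a substitute for this step. A minor further point: your heuristic that a positive-mass-type global argument would be needed \emph{above} the window is backwards --- large $n$ is the easy regime already covered by \cite{Jin-Xiong}, and it is the low-dimensional and locally conformally flat cases where local test functions fail and global arguments would be required.
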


In the paper \cite{Jin-Xiong}, the second named author and Xiong verified this conjecture under  one of the following two conditions:
\begin{itemize}
\item $n\ge 12$ and $\partial  M$ has a nonumbilic point;
\item $n\ge 10$, $\partial M$ is umbilic and the Weyl tensor $W_g\neq 0$ at some boundary point.
\end{itemize}
At the same time, Gluck and Zhu \cite{Gluck-Zhu} verified this conjecture  when $M=\overline {B_1}\setminus  B_\varepsilon$ for sufficiently small $\varepsilon>0$ with flat metric in all dimensions.

In this paper, we reduce the dimension assumption in \cite{Jin-Xiong} by three.

\begin{theorem} \label{Thm: J-X refined} 
Let $(M,g)$ be a  smooth compact Riemannian manifold of dimension $n$ with nonempty smooth boundary $\pa M$. Suppose that $\lambda_1(L_g)>0$. If one of the following two conditions 
\begin{itemize}
\item[(i)] $9\le n\le 11 $ and $\partial  M$ has a nonumbilic point;
\item[(ii)] $7\le n\le 9$, $\partial M$ is umbilic, and the Weyl tensor $W_g\neq 0$ at some boundary point;
\end{itemize}
holds, then the strict inequality \eqref{HWY:conj} holds, and consequently, $\Theta(M,g)$ is achieved.
\end{theorem}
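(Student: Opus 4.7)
The plan is to verify the strict inequality \eqref{HWY:conj} by exhibiting an explicit family of scalar-flat conformal test metrics $\tilde g_\epsilon = u_\epsilon^{4/(n-2)} g$ whose isoperimetric quotient exceeds $\Theta(\overline{B_1}, g_{\R^n})$. Since $\lambda_1(L_g)>0$, I would first perform a preliminary conformal change to assume $R_g=0$, so that the scalar-flat constraint $R_{\tilde g_\epsilon}=0$ reduces to the linear equation $\Delta_g u_\epsilon=0$. I then fix a base point $x_0\in\partial M$ at which the obstructing geometry is active: $\pi(x_0)\neq 0$ in case (i), where $\pi$ denotes the trace-free second fundamental form, and $W_g(x_0)\neq 0$ in case (ii). In a neighborhood of $x_0$ I would adopt conformal Fermi coordinates, which flatten $\partial M$ and trivialize the volume form to arbitrarily high order along the normal, so that the deviation of $g$ from the Euclidean half-space metric is pointwise controlled by $\pi$ and the ambient Weyl tensor.

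The test functions are modifications of the standard half-space bubbles
\[
U_{(x_0,\epsilon)}(y) = \Big(\frac{\epsilon}{\epsilon^2+|y|^2}\Big)^{(n-2)/2},
\]
cut off outside a Fermi coordinate ball. Bubbles alone reproduce the thresholds $n\ge 12$ and $n\ge 10$ of \cite{Jin-Xiong}, so to descend three dimensions I would add a hierarchy of higher-order corrections $\phi_\epsilon = \epsilon \psi_1 + \epsilon^2 \psi_2 + \epsilon^3 \psi_3$ defined as solutions of linear inhomogeneous problems $L_0 \psi_k = f_k[\pi, W_g, \psi_1,\dots,\psi_{k-1}]$ on $\R^n_+$, where $L_0$ is the linearization of the conformal Laplacian at $U_{(x_0,\epsilon)}$ with the appropriate free-boundary condition. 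The sources $f_k$ are chosen precisely to annihilate the leading geometric errors produced when $U_{(x_0,\epsilon)}$ is substituted into the nonlinear scalar-flat conformal equation and the accompanying mean-curvature expansion. Existence, decay at infinity, and pointwise asymptotics of the $\psi_k$ follow by separation of variables on $\R^n_+$, exploiting the radial structure of $U$. Finally the ansatz $U_{(x_0,\epsilon)}+\phi_\epsilon$ is corrected by a small Fredholm perturbation, available thanks to $\lambda_1(L_g)>0$, to produce a positive $u_\epsilon$ satisfying $\Delta_g u_\epsilon = 0$ exactly.

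With this refined $u_\epsilon$, I would expand $\int_M u_\epsilon^{2n/(n-2)}\,dv_g$ and $\int_{\partial M} u_\epsilon^{2(n-1)/(n-2)}\,d\sigma_g$ to sufficiently high order in $\epsilon$. The leading terms recover the Euclidean quotient, the cross terms between $U$ and $\phi_\epsilon$ cancel the intermediate obstructions, and the first nontrivial residual should be a strictly positive multiple of $\epsilon^{k(n)}|\pi(x_0)|^2$ in case (i) or $\epsilon^{k(n)}|W_g(x_0)|^2$ in case (ii), for an explicit dimension-dependent exponent $k(n)>0$. Substituting these expansions into the definition \eqref{theta} would then yield $\Theta(M,g)>\Theta(\overline{B_1},g_{\R^n})$, and the second conclusion follows from the compactness criterion already established in \cite{Hang-Wang-Yan}. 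The main obstacle is the delicate dimension-dependent bookkeeping: the powers of $\epsilon$ contributed by each geometric error, each correction $\psi_k$, and the remainder arising from the cutoff must be arranged so that the curvature-driven positive term strictly dominates precisely in the ranges $9\le n\le 11$ and $7\le n\le 9$. Going below these thresholds presumably requires a positive-mass-type ingredient, in analogy with the Schoen--Marques treatment of the boundary Yamabe problem.
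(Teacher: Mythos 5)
Your proposal diverges from the paper's route in a way that matters. The paper does \emph{not} modify the test function at all: it takes exactly the same boundary datum as \cite{Jin-Xiong}, namely the cut-off bubble $v_\e(x')=\big(\tfrac{\e}{\e^2+|x'|^2}\big)^{\frac{n-2}{2}}\chi_\rho(|x'|)$ in conformal Fermi coordinates, together with its $L_g$-harmonic (Poisson) extension $u_\e$, and gains the three dimensions purely by estimating the resulting expansion more sharply. Concretely, the coefficients $C_1(n)$ and $C_2(n)$ multiplying $\e^2|h|^2$ and $\e^4(R_{ninj})^2$ contain, besides an explicit term proportional to $(n-12)$ resp.\ $(n-10)$, manifestly nonnegative integrals involving auxiliary functions $V$ and $\Lambda$ solving Poisson problems in $\R^{n+4}_+$; in \cite{Jin-Xiong} these were simply discarded. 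The new work constructs explicit subsolutions of those problems (via the elementary ODE reductions of Lemma \ref{lem:calculus} and the maximum principle), evaluates the resulting Beta-function integrals, and shows the discarded terms are large enough to force $C_1(n)>0$ for $n\ge 9$ and $C_2(n)>0$ for $n\ge 7$. So your premise that ``bubbles alone reproduce the thresholds $n\ge 12$ and $n\ge 10$'' is a misdiagnosis: those thresholds reflected crude bookkeeping, not a limitation of the test function.

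Beyond this, there is a genuine gap in your argument as written. The entire mathematical content of the theorem is the sign determination of the leading coefficient in the stated dimension ranges, and your proposal defers exactly this to ``delicate dimension-dependent bookkeeping'' without carrying out any of it; nothing in the proposal explains why the thresholds land at $n=9$ and $n=7$ rather than elsewhere. Your correction scheme is also set up for the wrong variational problem: you speak of a ``free-boundary condition'' and a ``mean-curvature expansion,'' which belong to the Escobar boundary Yamabe functional, whereas here the interior profile is not free --- it is the Dirichlet Poisson extension $P_gv$ of the boundary datum $v$, so the only admissible modification is to $v$ itself, and the interior discrepancies $W_\e^{(1)},W_\e^{(2)},W_\e^{(3)}$ are forced contributions whose signs and magnitudes must be computed, not errors one is free to annihilate. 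Finally, even granting a corrected ansatz, you would still need lower bounds on the solutions of the half-space Poisson problems \eqref{eq:subsolution1} and \eqref{eq:subsolution2} (or their analogues) to conclude positivity; this is precisely the step the paper supplies and your proposal omits.
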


 Throughout the paper, we will always assume that $\lambda_1(L_g)>0$. Denote the Poisson kernel of $L_g u=0$ with Dirichlet boundary condition by $P_{g}$. It was pointed out in \cite{Hang-Wang-Yan} that
\begin{equation}\label{eq:energyinf}
\Theta(M,g)=\sup\left\{I[v]: v\in L^{\frac{2(n-1)}{n-2}}(\partial M), v\neq 0\right\},
\end{equation}
where
\[
I[v]=\frac{(\int_{M} |P_{g} v|^{\frac{2n}{n-2}}\,\ud \mu_g)^{\frac{1}{n}}}{(\int_{\partial M} |v|^{\frac{2(n-1)}{n-2}}\,\ud \sigma_g)^{\frac{1}{n-1}}}.
\]
Therefore, to show the strict inequality \eqref{HWY:conj}, we need to find a test function $v\in L^{\frac{2(n-1)}{n-2}}(\partial M)$ such that
\begin{equation}\label{eq:testinequality}
I[v]>\Theta(\overline {B_1},g_{\R^n}).
\end{equation}

Recall that it was shown in Hang-Wang-Yan \cite[Theorem 1.1]{Hang-Wang-Yan2} that 
$$
\Theta(\overline {B_1},g_{\R^n})=\Theta(\overline{\R^n_+},g_{\R^n}),
$$ 
where $\R^n_+=\{x=(x',x_n)\in\R^n: x'\in\R^{n-1},x_n>0\}$ is the upper half-space, and moreover, $\Theta(\overline{\R^n_+},g_{\R^n})$ defined as in \eqref{eq:energyinf} is achieved by  the so-called bubbles:
\begin{equation}\label{eq:bubble}
c\left(\frac{\e}{\e^2+|x'-\xi_0|^2}\right)^{\frac{n-2}{2}},
\end{equation}
where $c\in\R_+$, $\e>0$ and $\xi_0\in\R^{n-1}$.  The test function $v$ chosen in  \cite{Jin-Xiong} to verify \eqref{eq:testinequality} is a cut-off of the bubbles \eqref{eq:bubble} in proper coordinates on $M$ centered at a boundary point.  In our proof of Theorem \ref{Thm: J-X refined}, we choose the same test function, but we will give a more delicate calculation of the $L^{\frac{2n}{n-2}}$-norm of its Poisson extension $P_gv$.

One sees from the definition that $\Theta(M,g)$ depends only on the conformal class $[g]$. These results on the above variational problem \eqref{eq:energyinf}  show an analogy to the Yamabe problem solved by Yamabe \cite{Yamabe}, Trudinger \cite{Trudger}, Aubin \cite{Aubin} and Schoen \cite{Schoen}, as well as to the boundary Yamabe problem (or higher dimensional Riemannian mapping problem) studied by Escobar \cite{escobar1,escobar6}, Marques \cite{marques1, marques3}, Han-Li \cite{han-li1,han-li2}, Chen \cite{ChenSophie}, Almaraz \cite{Almaraz1}, Mayer-Ndiaye \cite{MN}, Chen-Ruan-Sun \cite{Chen-Ruan-Sun}, etc. A prescribing function problem of the isoperimetric ratio on the unit sphere, which is a Nirenberg type problem, has been studied by Xiong \cite{Xiong}. 

This paper is organized as follows. In the next section, we will review the proof in \cite{Jin-Xiong}.  In Section \ref{sec:proof}, we will first set up our objectives on how to reduce the dimension assumption in \cite{Jin-Xiong}, and carry out our detailed calculations afterwards.

 \bigskip

\noindent \textbf{Acknowledgement:} Part of this work was completed while the first named author was visiting the Department of Mathematics at the Hong Kong University of Science and Technology, to which  he is grateful   for providing  the very stimulating research environment and supports. We all would like to thank Professor YanYan Li for his interests and constant encouragement.

\section{An overview}\label{sec:review}

The purpose of this section is to summarize the proof of \cite[Theorem 1.2]{Jin-Xiong}, on which our calculations are based.

Let $P\in\pa M$ be a non-umbilic point in case (i) of Theorem \ref{Thm: J-X refined}, or a point at which the Weyl tensor of $M$ does not vanish in case (ii). Since $\Theta(M,g)$ is a conformal invariant, we can choose conformal Fermi coordinates (see Marques \cite{marques1}) $x=(x',x_n)$ centered at $P$ to simplify the computations.

For any fixed $1\gg \rho\gg \e>0$, denote by $\chi_\rho(t)$ a smooth cut-off function supported on $[0,2\rho]$ such that $\chi_\rho(t)=1$ in $[0,\rho]$ and $0<\chi_\rho(t)<1$ in $(\rho,2\rho)$. Let
$$v_\e(x')=\left(\frac{\e}{\e^2+|x'|^2}\right)^\frac{n-2}{2}\chi_\rho(|x'|)$$
in the above coordinates. Let $u_\e$ be the $L_g$-harmonic extension of $v_\e$ in $M$, or equivalently, the solution to 
$$L_gu_\e=0\quad\mathrm{in~~}M,\quad u_\e=v_\e\quad\mathrm{on~~}\pa M.$$
Roughly, we can regard $u_\e$ in $(M,g)$ as a small perturbation of the harmonic extension of $\left(\frac{\e}{\e^2+|x'|^2}\right)^\frac{n-2}{2}$ in the Euclidean upper half-space $\R_+^n$, which is
\[
\overline U_\e(x)=\left(\frac{\e}{(x_n+\e)^2+|x'|^2}\right)^\frac{n-2}{2}.
\]
Thus, it is natural to consider the error term $W_\e:=u_\e-\overline{U}_\e$, which satisfies
\begin{align*}
\begin{cases}
\displaystyle L_gW_\e=-L_g\overline{U}_\e=:F[\overline{U}_\e] \quad&\mathrm{in~~} \Omega,\\
\displaystyle W_\e(x',0)=0 &\mathrm{on~~}\pa\Omega\cap\pa M, \\
\displaystyle W_\e=u_\e-\overline{U}_\e &\mathrm{on~~}\pa\Omega\setminus\pa M.
\end{cases}
\end{align*}
Here $\Omega$ is a smooth domain in $\Rn$ such that $B_\rho^+(0)\subset \Omega\subset B_{2\rho}^+(0)$ and radially symmetric with respect to $x'$. 
Denote by $\mathcal{G}(f)$ the solution to 
$$-\D u=f\quad\mathrm{in~~}\Omega,\quad u=0\quad\mathrm{on~~}\pa\Omega.$$
Decompose $W_\e$ into three parts:
\begin{align}\label{JX-error decomposition}
W_\e^{(1)}=\mathcal{G}(F[\overline{U}_\e]),\quad W_\e^{(2)}=\mathcal{G}(F[W_\e^{(1)}]) \quad\mathrm{and}\quad W_\e^{(3)}=W_\e-W_\e^{(1)}-W_\e^{(2)}.
\end{align}   
It has been obtained in \cite{Jin-Xiong} that
\begin{align}\label{JX-volume expansion}
&\int_{M}|u_\e|^\frac{2n}{n-2}\ud \mu_g \nonumber \\
=&\frac{1}{n2^n}\omega_{n-1}+\frac{2n}{n-2}\int_\Omega\overline{U}_\e^\frac{n+2}{n-2}W_\e^{(1)}\ud x+\frac{2n}{n-2}\int_\Omega\overline{U}_\e^\frac{n+2}{n-2}W_\e^{(2)}\ud x++\frac{2n}{n-2}\int_\Omega\overline{U}_\e^\frac{n+2}{n-2}W_\e^{(3)}\ud x\no\\
&+\frac{n(n+2)}{(n-2)^2}\int_\Omega\overline{U}_\e^\frac{4}{n-2}W_\e^2\ud x+h.o.t.,
\end{align}
and
\begin{align}\label{JX-area expansion}
\int_{\pa M}v_\e^\frac{2(n-1)}{n-2}\ud\sigma_g=2^{1-n}\omega_{n-1}+O(\e^{n-1}).
\end{align}

\textbf{Case 1.} If $\pa M$ admits a non-umbilic point $P$, then it follows from \cite[(19), (22), (24)]{Jin-Xiong} that for $n\ge 5$ and $s=1,2,3,4$,
\begin{equation}\label{eq:estimateofleading}
\begin{split}
|W_\e^{(1)}|+|x+\e e_n|^{s}|\nabla^s W_\e^{(1)}|& \le  C \e^{\frac{n-2}{2}} |x+\e e_n|^{3-n},\\
|W_\e^{(2)}|+|x+\e e_n|^{s}|\nabla^s W_\e^{(2)}|& \le  C \e^{\frac{n-2}{2}} |x+\e e_n|^{4-n},\\
|W_\e^{(3)}|& \le  
\begin{cases}
C \e^{\frac{n-2}{2}} |x+\e e_n|^{5-n}&\quad\mathrm{if~~}n\ge 6,\\
C \e^{\frac{n-2}{2}} |\log\e|&\quad\mathrm{if~~}n=5,
\end{cases}
\end{split}
\end{equation}
and from \cite[(30), (37), (38)]{Jin-Xiong} that
\begin{align*}
\frac{2n}{n-2}\int_\Omega\overline{U}_\e^\frac{n+2}{n-2}W_\e^{(1)}\ud x 
=&\frac{\omega_{n-2}(n-12)B(\frac{n-1}{2},\frac{n+1}{2})}{4n(n-1)(n-2)(n-3)}|h|^2\e^2+O(\e^3), \\
\frac{2n}{n-2}\int_\Omega\overline{U}_\e^\frac{n+2}{n-2}W_\e^{(2)}\ud x 
=&\frac{8n^2(n+2)}{3}|h|^2\e^2\int_{\Rn}x^2_n|x+e_n|^{-(n+4)}V(|x'|,x_n)x_1^4 \ud x +O(\e^3),\\
\frac{2n}{n-2}\int_\Omega\overline{U}_\e^\frac{n+2}{n-2}W_\e^{(3)}\ud x 
=&O(\e^3),\\
\frac{n(n+2)}{(n-2)^2}\int_\Omega\overline{U}_\e^\frac{4}{n-2}W_\e^2\ud x 
=&\frac{8n^3(n+2)}{3}|h|^2\e^2\int_{\Rn}|x+e_n|^{-4}V^2(|x'|,x_n)x_1^4\ud x+O(\e^3),
\end{align*}
where $h$ is the second fundamental form at $P$ with respect to the outward unit normal vector and $V$ is a positive function. Moreover, if we define 
\[
\widetilde{V}(x)=V(|x'|,x_{n+4})
\] 
($x\in\R^{n+4}_+$, $x'=(x_1,\cdots,x_{n+3})$) as a function in $\R^{n+4}_+$, then $\widetilde{V}$ satisfies (see \cite[(35)]{Jin-Xiong})
\begin{align}\label{eq:subsolution1}
\begin{cases}
\displaystyle -\D \widetilde V=x_{n+4}|x+e_{n+4}|^{-n-2}, &\quad \mathrm{in~~} \R^{n+4}_+, \\
\displaystyle  \widetilde V=0,   &\quad \mathrm{on~~} \pa \R^{n+4}_+.  
\end{cases}
\end{align}
Therefore, from \eqref{JX-volume expansion} we have
\[
\int_{M}|u_\e|^\frac{2n}{n-2}\ud \mu_g =\frac{1}{n2^n}\omega_{n-1} + C_1(n)\e^2|h|^2+h.o.t.,
\]
where
\begin{equation}\label{eq:C1}
\begin{split}
C_1(n):=&\frac{\omega_{n-2}(n-12)B(\frac{n-1}{2},\frac{n+1}{2})}{4n(n-1)(n-2)(n-3)}\\
&+\frac{8n^2(n+2)}{3}\int_{\Rn}x^2_n|x+e_n|^{-(n+4)}V(|x'|,x_n)x_1^4 \ud x \\
&+\frac{8n^3(n+2)}{3}\int_{\Rn}|x+e_n|^{-4}V^2(|x'|,x_n)x_1^4\ud x,
\end{split}
\end{equation}
and $B(\cdot,\cdot)$ is the Beta function. Since $|h|^2>0$ in this case, there holds $C_1(n)>0$ when $n \geq 12$, from which the result in \cite{Jin-Xiong} follows.

\medskip

\textbf{Case 2.} Assume that $\pa M$ is umbilic and the Weyl tensor $W_g$ of $M$ is nonzero at some boundary point $P$, then it follows from \cite[(43)]{Jin-Xiong} that for $n\ge 7$ and $s=1,2,3,4$,
\begin{equation}\label{eq:estimateofleading2}
\begin{split}
|W_\e^{(1)}|+|x+\e e_n|^{s}|\nabla^s W_\e^{(1)}|& \le  C \e^{\frac{n-2}{2}} |x+\e e_n|^{4-n},\\
|W_\e^{(2)}|+|x+\e e_n|^{s}|\nabla^s W_\e^{(2)}|& \le  C \e^{\frac{n-2}{2}} |x+\e e_n|^{6-n},\\
|W_\e^{(3)}|& \le   
\begin{cases}
C\e^{\frac{n-2}{2}} |x+\e e_n|^{8-n}&\quad\mathrm{if~~}n\ge 9,\\
C\e^{\frac{n-2}{2}} |\log\e|&\quad\mathrm{if~~}n= 8,\\
C\e^{\frac{n-2}{2}} &\quad\mathrm{if~~}n= 7,
\end{cases}
\end{split}
\end{equation}
and from\cite[(46), (51), (52)]{Jin-Xiong} that
\begin{align*}
\frac{2n}{n-2}\int_\Omega\overline{U}_\e^\frac{n+2}{n-2}W_\e^{(1)}\ud x 
=&\frac{3(n-10)\omega_{n-2}B(\frac{n-1}{2},\frac{n+1}{2})}{2n(n-1)(n-2)(n-3)(n-4)(n-5)}(R_{ninj})^2\e^4+a(n)|\overline{W}|^2\e^4+O(\e^5),\\
\frac{2n}{n-2}\int_\Omega\overline{U}_\e^\frac{n+2}{n-2}W_\e^{(2)}\ud x  
=&\frac{2n^2(n+2)}{3}(R_{ninj})^2\e^4\int_{\R^n_+}|x+e_n|^{-n-4}\Lambda(|x'|,x_n)x_n^3x_1^4\ud x +O(\e^5),\\
\frac{2n}{n-2}\int_\Omega\overline{U}_\e^\frac{n+2}{n-2}W_\e^{(3)}\ud x  
=&O(\e^5),\\
\frac{n(n+2)}{(n-2)^2}\int_\Omega\overline{U}_\e^\frac{4}{n-2}W_\e^2\ud x 
&=\frac{2n^3(n+2)}{3}(R_{ninj})^2\e^4\int_{\R^n_+}|x+e_n|^{-4}x_1^4\Lambda^2(|x'|,x_n)\ud x+O(\e^5),
\end{align*}
where $a(n)>0$ is a constant, $\overline{W}$ is the Weyl tensor of $\pa M$ with the induced metric of $g$,  and $\Lambda$ is a positive function. Moreover, if we define 
\[
\widetilde{\Lambda}(x)=\Lambda(|x'|,x_{n+4})
\] 
as a function in $\R^{n+4}_+$, then $\tilde{\Lambda}$ satisfies  (see \cite[(50)]{Jin-Xiong})
\begin{align}\label{eq:subsolution2}
\begin{cases}
\displaystyle -\D \widetilde \Lambda=x_{n+4}^2|x+e_{n+4}|^{-n-2}, &\quad \mathrm{in~~} \R^{n+4}_+, \\
\displaystyle  \widetilde \Lambda=0,   &\quad \mathrm{on~~} \pa \R^{n+4}_+.  
\end{cases}
\end{align}
Therefore, from \eqref{JX-volume expansion} we have
\[
\int_{M}|u_\e|^\frac{2n}{n-2}\ud \mu_g =\frac{1}{n2^n}\omega_{n-1} + C_2(n)\e^4(R_{ninj})^2+a(n)|\overline{W}|^2\e^4+h.o.t.,
\]
where
\begin{equation}\label{eq:C2}
\begin{split}
C_2(n):=&\frac{3(n-10)\omega_{n-2}B(\frac{n-1}{2},\frac{n+1}{2})}{2n(n-1)(n-2)(n-3)(n-4)(n-5)}\\
&+\frac{2n^2(n+2)}{3}\int_{\R^n_+}|x+e_n|^{-n-4}\Lambda(|x'|,x_n)x_n^3x_1^4\ud x \\
&+\frac{2n^3(n+2)}{3}\int_{\R^n_+}|x+e_n|^{-4}x_1^4\Lambda^2(|x'|,x_n)\ud x,
\end{split}
\end{equation}
and $B(\cdot,\cdot)$ is the Beta function. It is known from  Almaraz \cite[Lemma 2.5]{Almaraz1} that under conformal Fermi coordinates around $P$, $|\overline W|^2+(R_{ninj})^2\neq 0$ at $P$ is equivalent to the Weyl tensor $|W_g|\neq 0$ at $P$. 
Therefore, if $n\ge 10$, then we have $C_2(n)>0$, from which the result in \cite{Jin-Xiong} follows.

 \section{Proofs}\label{sec:proof}
As mentioned in \cite[Remarks 3.4 and 4.2]{Jin-Xiong}, one may reduce the dimension assumptions in \cite{Jin-Xiong} if one can explicitly calculate, or obtain useful lower bounds of $\widetilde V$ and $\widetilde \Lambda$, that are solutions of \eqref{eq:subsolution1} and \eqref{eq:subsolution2} respectively.

In this paper, we find a way of obtaining some useful subsolutions to \eqref{eq:subsolution1} and \eqref{eq:subsolution2}, which serve as lower bounds of $\widetilde V$ and $\widetilde \Lambda$, respectively.  These give better estimates of the constants $C_1(n)$ and $C_2(n)$, defined in \eqref{eq:C1} and \eqref{eq:C2}, respectively, which in return reduce the dimension assumptions in \cite{Jin-Xiong}.

\subsection{Two calculus lemmas}

The Laplacian operator in $\R^{n+4}$ applying to functions that are radial in the first $n+3$ variables (denoting $r=|(x_1,\cdots, x_{n+3})|, s=x_{n+4}$) is
\[
\Delta=\frac{\partial^2 }{\partial r^2}+\frac{n+2}{r} \frac{\partial}{\partial r}+\frac{\partial^2 }{\partial s^2}.
\]
All of our calculations are based on the next calculus lemma.

\begin{lemma}\label{lem:calculus}
For $\phi(r,s)=f(s)(r^2+(1+s)^2)^{-\frac{\alpha}{2}}$, $\alpha\in\mathbb{R}, r\ge 0,s\ge 0$, we have
\begin{align*}
&-\left(\phi_{rr}+\frac{n+2}{r}\phi_r+\phi_{ss}\right)\\
=&\Big[\alpha (n+2-\alpha) f(s) +2\alpha(1+s)f'(s)-f''(s) (r^2+(1+s)^2)\Big](r^2+(1+s)^2)^{-\frac{\alpha+2}{2}}.
\end{align*}
In particular, if $f$ is convex, then we have
\begin{align*}
-\left(\phi_{rr}+\frac{n+2}{r}\phi_r+\phi_{ss}\right)\le \Big[\alpha (n+2-\alpha) f(s) +2\alpha(1+s)f'(s)\Big](r^2+(1+s)^2)^{-\frac{\alpha+2}{2}}.
\end{align*}
\end{lemma}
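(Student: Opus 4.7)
The statement is a direct computation, so the plan is simply to differentiate $\phi(r,s)=f(s)\,T^{-\alpha/2}$, where $T:=r^2+(1+s)^2$, collect the terms, and read off the two stated identities. There is no real obstacle; the only mild care needed is to keep track of the powers of $T$ that appear when $\partial_s$ hits the factor $T^{-\alpha/2}$, and to notice at the end that the cross-term $r^2+(1+s)^2$ coming from $\phi_{rr}$ equals $T$ and therefore combines with one of the $f(s)\,T^{-\alpha/2-1}$ contributions.

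Concretely, I would first compute the $r$-derivatives. Since $\partial_r T = 2r$, one gets
\[
\phi_r = -\alpha r\,f(s)\,T^{-\alpha/2-1},\qquad
\phi_{rr} = -\alpha\,f(s)\,T^{-\alpha/2-1} + \alpha(\alpha+2)\,r^2 f(s)\,T^{-\alpha/2-2}.
\]
Dividing $\phi_r$ by $r$ and multiplying by $n+2$ combines with $\phi_{rr}$ to yield
\[
\phi_{rr}+\tfrac{n+2}{r}\phi_r
= -\alpha(n+3)\,f(s)\,T^{-\alpha/2-1} + \alpha(\alpha+2)\,r^2 f(s)\,T^{-\alpha/2-2}.
\]

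Next I would compute the $s$-derivatives. Using $\partial_s T = 2(1+s)$ and the product rule,
\[
\phi_s = f'(s)\,T^{-\alpha/2} - \alpha(1+s)\,f(s)\,T^{-\alpha/2-1},
\]
and then, being careful with the two Leibniz terms that arise, one arrives at
\[
\phi_{ss} = f''(s)\,T^{-\alpha/2} - 2\alpha(1+s)\,f'(s)\,T^{-\alpha/2-1} - \alpha\,f(s)\,T^{-\alpha/2-1} + \alpha(\alpha+2)(1+s)^2 f(s)\,T^{-\alpha/2-2}.
\]

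Adding the three pieces, the two terms carrying $T^{-\alpha/2-2}$ combine into $\alpha(\alpha+2)\bigl(r^2+(1+s)^2\bigr)f(s)\,T^{-\alpha/2-2} = \alpha(\alpha+2)\,f(s)\,T^{-\alpha/2-1}$. This merges with the $-\alpha(n+3)$ and the stray $-\alpha$ coefficients into $\alpha(\alpha - n - 2)\,f(s)\,T^{-\alpha/2-1}$. Multiplying by $-1$ and factoring out $T^{-(\alpha+2)/2}$ produces exactly
\[
-\Bigl(\phi_{rr}+\tfrac{n+2}{r}\phi_r+\phi_{ss}\Bigr)
= \bigl[\alpha(n+2-\alpha)f(s)+2\alpha(1+s)f'(s)-f''(s)\bigl(r^2+(1+s)^2\bigr)\bigr]T^{-(\alpha+2)/2},
\]
which is the first identity. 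For the second claim, $f$ convex gives $f''\ge 0$, so the last term in brackets is non-positive and may be dropped, producing the stated inequality. I would end the proof there.
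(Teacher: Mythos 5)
Your computation is correct: all the derivatives, the recombination of the two $T^{-\alpha/2-2}$ terms via $r^2+(1+s)^2=T$, and the final coefficient $\alpha(\alpha-n-2)$ check out, and the inequality follows from $f''\ge 0$. The paper omits the proof as elementary, and your direct differentiation is exactly the intended argument.
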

The proof of this lemma is elementary, and we omit the details.

We also summarize the following calculation as a lemma, which will be frequently used as well. It is essentially just the integration by parts.
\begin{lemma}\label{lem:changeofvariables}
Suppose $\phi_1,\phi_2\in C^2(\overline \R_+\times\overline \R_+)$ are two nonnegative functions with sufficiently fast decay at infinity. For $i=1,2$, let $\tilde u_i$ be the unique solution (decay to zero at infinity) of
\begin{align*}
\begin{cases}
\displaystyle -\D \tilde u_i=\phi_i(|x'|,x_{n+4}),& \mathrm{in~~} \R^{n+4}_+, \\
\displaystyle \tilde u_i=0, & \mathrm{on~~} \pa \R^{n+4}_+,
\end{cases}
\end{align*}
where $x'=(x_1,\cdots,x_{n+3})$ for $x\in \R^{n+4}$. For $r,s\ge 0$, let $u_i(r,s)=\tilde u_i(r,0,\cdots,0, s)$. Then
\[
\int_{\Rn}\phi_1(|x'|,x_n)u_2(|x'|,x_n)x_1^4 \ud x= \int_{\Rn}\phi_2(|x'|,x_n)u_1(|x'|,x_n)x_1^4 \ud x.
\]
\end{lemma}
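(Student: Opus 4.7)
The plan is to reduce both sides of the identity to the same one-dimensional integral in the variables $(r,s)$ and then appeal to the self-adjointness of $-\D$ with Dirichlet boundary data on $\R^{n+4}_+$.

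First, because $\phi_i(|x'|,x_{n+4})$ is rotationally symmetric in $x'\in\R^{n+3}$, uniqueness of the decaying solution forces each $\tilde u_i$ to be rotationally symmetric in $x'$ as well, so indeed $\tilde u_i(x)=u_i(|x'|,x_{n+4})$. Next, I would rewrite the left-hand side of the claimed identity using spherical coordinates on the $\R^{n-1}$-slice $(x_1,\ldots,x_{n-1})$: writing $x'=r\theta$ with $\theta\in\mathbb{S}^{n-2}$, the volume element is $r^{n-2}\,dr\,d\sigma(\theta)$ and $x_1^4=r^4\theta_1^4$, giving
\begin{align*}
\int_{\Rn}\phi_1(|x'|,x_n)u_2(|x'|,x_n)x_1^4\,\ud x=\Big(\int_{\mathbb{S}^{n-2}}\theta_1^4\,d\sigma\Big)\int_0^\infty\!\int_0^\infty\phi_1(r,s)u_2(r,s)r^{n+2}\,dr\,ds,
\end{align*}
and the identical computation holds for the right-hand side with $\phi_2,u_1$ in place of $\phi_1,u_2$. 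The key point is that the $x_1^4$-weight combines with $r^{n-2}$ to produce exactly $r^{n+2}$, which is the radial weight one would get from unweighted spherical coordinates on $\R^{n+3}$.

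Thus it suffices to match the displayed $(r,s)$-integrals. For this I would pass to $\R^{n+4}_+$: using spherical coordinates on $\R^{n+3}$ with surface measure $\omega_{n+2}$,
\begin{align*}
\int_{\R^{n+4}_+}\phi_1(|x'|,x_{n+4})\tilde u_2\,\ud x=\omega_{n+2}\int_0^\infty\!\int_0^\infty\phi_1(r,s)u_2(r,s)r^{n+2}\,dr\,ds,
\end{align*}
and likewise with $1,2$ swapped. The two $\R^{n+4}_+$-integrals coincide by Green's identity:
\begin{align*}
\int_{\R^{n+4}_+}\phi_1\tilde u_2\,\ud x=\int_{\R^{n+4}_+}(-\D\tilde u_1)\tilde u_2\,\ud x=\int_{\R^{n+4}_+}\tilde u_1(-\D\tilde u_2)\,\ud x=\int_{\R^{n+4}_+}\phi_2\tilde u_1\,\ud x,
\end{align*}
since the boundary contributions at $\{x_{n+4}=0\}$ vanish by the Dirichlet condition $\tilde u_i=0$, and the contributions at infinity vanish because the rapid decay of each $\phi_i$ yields $\tilde u_i(x)=O(|x|^{2-(n+4)})$, $\nabla\tilde u_i(x)=O(|x|^{1-(n+4)})$ — ample decay to justify integration by parts. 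Cancelling the common positive factor then yields the lemma.

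There is no substantive obstacle here; the content is just bookkeeping. The conceptually satisfying observation is that the four extra powers of $|x'|$ from the $x_1^4$-weight shift the effective radial dimension from $n-1$ up to $n+3$, which is precisely why solutions of the $(n+4)$-dimensional Poisson problem \eqref{eq:subsolution1}–\eqref{eq:subsolution2} are the right objects to encode the $x_1^4$-weighted integrals appearing in the expansions of $\int_\Omega \overline U_\e^{(n+2)/(n-2)}W_\e^{(2)}\,\ud x$ and $\int_\Omega \overline U_\e^{4/(n-2)}W_\e^2\,\ud x$ from Section \ref{sec:review}.
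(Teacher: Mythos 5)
Your argument is correct and is essentially the paper's own proof: both reduce the weighted $\R^n_+$-integrals and the unweighted $\R^{n+4}_+$-integrals to the same $(r,s)$-integral with radial weight $r^{n+2}$ (your factor $\int_{\mathbb{S}^{n-2}}\theta_1^4\,d\sigma=\tfrac{3\omega_{n-2}}{(n-1)(n+1)}$ matches the paper's constant $\tfrac{3\omega_{n-2}}{(n-1)(n+1)\omega_{n+2}}$ after dividing by $\omega_{n+2}$), and then conclude by the symmetry of $\int(-\Delta\tilde u_1)\tilde u_2$ via integration by parts. You simply spell out the radial bookkeeping and the decay justification in more detail than the paper does.
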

\begin{proof}
From the Green's function of $-\Delta$ in $\R^{n+4}_+$ with zero Dirichlet boundary condition, we know that $\tilde u_i$ is radial in $x'$. Using this symmetry and integration by parts, we have
\begin{align*}
&\int_{\Rn}\phi_1(|x'|,x_n)u_2(|x'|,x_n)x_1^4 \ud x\\
=&\frac{3\omega_{n-2}}{(n-1)(n+1)\omega_{n+2}}\int_{\mathbb{R}^{n+4}_+}\phi_1(|x'|,x_{n+4})u_2(|x'|,x_{n+4})\ud x\\
=&\frac{3\omega_{n-2}}{(n-1)(n+1)\omega_{n+2}}\int_{\mathbb{R}^{n+4}_+}(-\Delta\tilde u_1) \tilde u_2 \ud x\\
=&\frac{3\omega_{n-2}}{(n-1)(n+1)\omega_{n+2}}\int_{\mathbb{R}^{n+4}_+}(-\Delta\tilde u_2) \tilde u_1 \ud x\\
=&\int_{\Rn}\phi_2(|x'|,x_n)u_1(|x'|,x_n)x_1^4 \ud x.
\end{align*}
This finishes the proof.
\end{proof}
For convenience, we did not explicitly assume the required decay rates on $\phi_1$ and $\phi_2$ in  Lemma \ref{lem:changeofvariables}. However, when this lemma is applied in the next two sections, it will be clear that the decay rates there will be sufficient.

\subsection{Non-umbilic boundary in dimensions $9 \leq n \leq 11$}
As stated earlier, to estimate $\widetilde V$ defined in \eqref{eq:subsolution1}, we want to find a sub-solution of
\begin{align*}
\begin{cases}
\displaystyle -\D \widetilde V=x_{n+4}|x+e_{n+4}|^{-n-2}, &\quad \mathrm{in~~} \R^{n+4}_+, \\
\displaystyle  \widetilde V=0,   &\quad \mathrm{on~~} \pa \R^{n+4}_+.  
\end{cases}
\end{align*}
According to Lemma \ref{lem:calculus}, we will choose $\alpha=n$, and search for the solution of
\begin{equation}\label{eq:ode1}
\begin{cases}
\alpha (n+2-\alpha) f(s) +2\alpha(1+s)f'(s)=s,\\
f(0)=0.
\end{cases}
\end{equation}
The solution of \eqref{eq:ode1} is 
\[
\frac{s^2}{4n(1+s)},
\]
which is a convex function. So we have
\begin{align*}
-\D \left(\frac{x_{n+4}^2}{4n(1+x_{n+4})}|x+e_{n+4}|^{-n}\right)=x_{n+4}|x+e_{n+4}|^{-n-2}-\frac{1}{2n(1+x_{n+4})^3}|x+e_{n+4}|^{-n}.
\end{align*}
Set
$$\tilde u_1(x)=\widetilde V(x)-\frac{1}{4n}\frac{x_{n+4}^2}{1+x_{n+4}}|x+e_{n+4}|^{-n},\quad x\in \R^{n+4}_+.$$
Then
\begin{align}\label{eq:u_1}
\begin{cases}
\displaystyle-\D \tilde u_1
   = \frac{1}{2n(1+x_{n+4})^3}|x+e_{n+4}|^{-n}, &\mathrm{in~~} \mathbb{R}_+^{n+4},\\
\tilde u_1=0, &\mathrm{on~~} \pa \mathbb{R}_+^{n+4}.
\end{cases}
\end{align}
Thus, it follows from the maximum principle that
\begin{align}\label{ineq:subsol:1}
\tilde u_1(x)= \widetilde V(x) - \frac{1}{4n}\frac{x_{n+4}^2}{1+x_{n+4}}|x+e_{n+4}|^{-n}\geq 0 \quad \mathrm{in~~} \overline{\mathbb{R}_+^{n+4}}. \end{align}

Now we are going to give a better estimate of  $C_1(n)$ defined in \eqref{eq:C1}, which is
\begin{equation*}
\begin{split}
C_1(n)=&\frac{\omega_{n-2}(n-12)B(\frac{n-1}{2},\frac{n+1}{2})}{4n(n-1)(n-2)(n-3)}\\
&+\frac{8n^2(n+2)}{3}\int_{\Rn}x^2_n|x+e_n|^{-(n+4)}V(|x'|,x_n)x_1^4 \ud x \\
&+\frac{8n^3(n+2)}{3}\int_{\Rn}|x+e_n|^{-4}V^2(|x'|,x_n)x_1^4\ud x.
\end{split}
\end{equation*}

Let
\[
u_1(r,s)= \tilde u_1 (r,0,\cdots,0,s),
\]
and recall
\[
V(r,s)= \widetilde V(r,0,\cdots,0,s).
\]
By \eqref{ineq:subsol:1} we have
\begin{equation}\label{eq:linearV}
\begin{split}
&\int_{\Rn}x^2_n|x+e_n|^{-(n+4)}V(|x'|,x_n)x_1^4 \ud x \\
=&\frac{1}{4n}\int_{\Rn}x^2_n|x+e_n|^{-n-4}\frac{x_n^2}{1+x_n}|x+e_n|^{-n}x_1^4 \ud x +\int_{\Rn}x^2_n|x+e_n|^{-(n+4)}u_1(|x'|,x_n)x_1^4 \ud x.
\end{split}
\end{equation}
For the first term, we have
\begin{align}
\frac{1}{4n}\int_{\Rn}|x+e_n|^{-2n-4}\frac{x_1^4x_n^4}{1+x_n} \ud x 
=&\frac{3\omega_{n-2}}{4n(n-1)(n+1)}\int_0^{\infty}\frac{x_n^4\ud x_n}{(1+x_n)^{n+2}}\int_0^{\infty}\frac{r^{n+2}\ud r}{(1+r^2)^{n+2}}\nonumber \\
=&\frac{9\omega_{n-2}B(\frac{n-1}{2},\frac{n+1}{2})}{4(n+1)^2n^3(n-1)(n-2)(n-3)}.\label{eq:linearV1}
\end{align}
For the second term, if we let
$\tilde v_1$ be the solution (decay to zero at infinity) of
\begin{align*}
\begin{cases}
\displaystyle -\D \tilde v_1=x_{n+4}^2|x+e_{n+4}|^{-n-4},& \mathrm{in~~} \R^{n+4}_+, \\
\displaystyle \tilde v_1=0, & \mathrm{on~~} \pa \R^{n+4}_+,
\end{cases}
\end{align*}
and notice that $\tilde v_1$ is radial in the $x'=(x_1,\cdots,x_{n+3})$ variable, then it follows from Lemma \ref{lem:changeofvariables} and \eqref{eq:u_1} that

\begin{align*}
\int_{\Rn}x^2_n|x+e_n|^{-(n+4)}u_1(|x'|,x_n)x_1^4 \ud x
&=\frac{1}{2n}\int_{\Rn}v_1(|x'|,x_n)\frac{1}{(1+x_{n})^3}|x+e_{n}|^{-n}x_1^4\ud x,
\end{align*}
where 
\[
v_1(r,s)= \tilde v_1 (r,0,\cdots,0,s).
\]

Next we give some lower bound estimates of $\tilde v_1$.  According to Lemma \ref{lem:calculus}, we will choose $\alpha=n+2$,  and search for the solution of
\begin{equation}\label{eq:ode2}
\begin{cases}
2\alpha(1+s)f'(s)=s^2,\\
f(0)=0.
\end{cases}
\end{equation}
The solution of \eqref{eq:ode2} is 
\[
\frac{1}{2(n+2)}\left[\frac 12 s^2-s+\log(1+s)\right],
\]
which is a convex function. Hence, we have in $\R^{n+4}_+$ that
\begin{align*}
&- \D\left\{\frac{1}{2(n+2)}\left[\frac{1}{2}x_{n+4}^2-x_{n+4}+\log(1+x_{n+4})\right]|x+e_{n+4}|^{-n-2}\right\} \le x_{n+4}^2|x+e_{n+4}|^{-n-4}.
\end{align*}
So it follows from the maximum principle that
\begin{align}\label{ineq:subsol:2}
\tilde v_1(x)\geq \frac{1}{2(n+2)}\left[\frac{1}{2}x_{n+4}^2-x_{n+4}+\log(1+x_{n+4})\right]|x+e_{n+4}|^{-n-2} \geq 0 \quad \mathrm{in~~} \overline{\R^{n+4}_+}.
\end{align}
Therefore,
\begin{align}
&\int_{\Rn}x^2_n|x+e_n|^{-(n+4)}u_1(|x'|,x_n)x_1^4 \ud x\nonumber\\
\ge &\frac{1}{4n(n+2)}\int_{\Rn}\left[\frac{1}{2}x_{n}^2-x_{n}+\log(1+x_{n})\right]|x+e_{n}|^{-2n-2}\frac{x_1^4}{(1+x_{n})^3}\ud x\nonumber \\
=&\frac{3\omega_{n-2}}{4n(n+2)(n-1)(n+1)}
\int_0^{\infty}\frac{\frac{1}{2}x_{n}^2-x_{n}+\log(1+x_{n})}{(1+x_n)^{n+2}}\ud x_n\int_0^\infty\frac{r^{n+2}\ud r}{(1+r^2)^{n+1}} \nonumber\\
=&\frac{3\omega_{n-2}}{4(n+2)(n+1)n(n-1)}\frac{2}{(n+1)^2n(n-1)}\frac{n+1}{4n}B\left(\tfrac{n-1}{2},\tfrac{n+1}{2}\right) \nonumber\\
=&\frac{3\omega_{n-2}B\left(\frac{n-1}{2},\frac{n+1}{2}\right)}{8(n+2)(n+1)^2n^3(n-1)^2}.\label{eq:linearV2}
\end{align}

By \eqref{ineq:subsol:1} we obtain
\begin{align}
&\int_{\Rn}|x+e_n|^{-4}V^2(|x'|,x_n)x_1^4\ud x\nonumber\\
=&\int_{\Rn}|x+e_n|^{-4}\left(\frac{1}{4n}\frac{x_n^2}{1+x_n}|x+e_n|^{-n}\right)^2x_1^4 \ud x +\frac{1}{2n}\int_{\Rn}|x+e_n|^{-n-4}\frac{x_n^2}{1+x_n}x_1^4 u_1(|x'|,x_n) \ud x \nonumber\\
&+\int_{\Rn}|x+e_n|^{-4}u_1^2(|x'|,x_n) x_1^4 \ud x\nonumber\\
=& I_1+I_2+I_3.\label{eq:quarV}
\end{align}
For $I_1$, we have
\begin{align}
I_1=&\frac{3\omega_{n-2}}{16n^2(n+1)(n-1)}\int_0^{\infty}\frac{x_n^4 \ud x_n}{(1+x_n)^{n+3}}\int_0^{\infty}\frac{r^{n+2}\ud r}{(1+r^2)^{n+2}} \nonumber\\
=&\frac{9\omega_{n-2}B(\frac{n-1}{2},\frac{n+1}{2})}{16(n+2)(n+1)^2n^4(n-1)(n-2)}.\label{eq:quarV1}
\end{align}

For $I_2$, if we let $ \tilde w_1$ be the solution (decay to zero at infinity) of
\begin{align*}
\begin{cases}
\displaystyle -\D \tilde w_1=\frac{x_{n+4}^2}{1+x_{n+4}}|x+e_{n+4}|^{-n-4},& \mathrm{in~~} \R^{n+4}_+, \\
\displaystyle \tilde w_1=0, & \mathrm{on~~} \pa \R^{n+4}_+,
\end{cases}
\end{align*}
and notice that $\tilde w_1$ is radial in the $x'=(x_1,\cdots,x_{n+3})$ variable, then it follows from Lemma \ref{lem:changeofvariables} and \eqref{eq:u_1} that
 \begin{align*}
I_2=\frac{1}{4n^2}\int_{\mathbb{R}^n_+}w_1(|x'|,x_n)(1+x_n)^{-3}|x+e_n|^{-n}x_1^4\ud x,
\end{align*}
where 
\[
w_1(r,s)= \tilde w_1 (r,0,\cdots,0,s).
\]

We will give a lower bound estimate for $\tilde w_1$.  According to Lemma \ref{lem:calculus}, we will choose $\alpha=n+2$,  and search for the solution of
\begin{equation}\label{eq:ode3}
\begin{cases}
2\alpha(1+s)f'(s)=\frac{s^2}{1+s},\\
f(0)=0.
\end{cases}
\end{equation}
The solution of \eqref{eq:ode3} is 
\[
\frac{1}{2(n+2)}\left[1+ s- 2\log(1+s)-\frac{1}{1+s}\right],
\]
which is a convex function. Hence, we have
\begin{align*}
&- \D\left\{\frac{1}{2(n+2)}\left[1+x_{n+4}-2\log(1+x_{n+4})-\frac{1}{1+x_{n+4}}\right]|x+e_{n+4}|^{-n-2}\right\} \\
&\le \frac{x_{n+4}^2}{1+x_{n+4}}|x+e_{n+4}|^{-n-4}\quad \mathrm{in~~} \R^{n+4}_+.
\end{align*}
So it follows from the maximum principle that
\[
\tilde w_1(x)\ge \frac{1}{2(n+2)}\left(1+x_{n+4}-2\log(1+x_{n+4})-\frac{1}{1+x_{n+4}}\right)|x+e_{n+4}|^{-n-2}\ge 0 \quad \mathrm{in~~} \overline{\R^{n+4}_+}.
\]
Thus,
\begin{align}
I_2\ge & \frac{1}{8n^2(n+2)}\int_{\mathbb{R}^n_+}\left(1+x_{n}-2\log(1+x_{n})-\frac{1}{1+x_{n}}\right)(1+x_n)^{-3}|x+e_n|^{-2n-2}x_1^4\ud x\nonumber\\
=&\frac{3\omega_{n-2}}{8n^2(n+2)(n-1)(n+1)}
\int_0^{\infty}\frac{1+x_{n}-2\log(1+x_{n})-\frac{1}{1+x_{n}}}{(1+x_n)^{n+2}}\ud x_n\int_0^\infty\frac{r^{n+2}\ud r}{(1+r^2)^{n+1}}\nonumber\\
=&\frac{3\omega_{n-2}}{8n^2(n+2)(n-1)(n+1)} \frac{2}{n(n+2)(n+1)^2}    \frac{n+1}{4n}B\left(\tfrac{n-1}{2},\tfrac{n+1}{2}\right)\nonumber\\
=& \frac{3\omega_{n-2}}{16n^4(n+2)^2(n+1)^2(n-1)}  B\left(\tfrac{n-1}{2},\tfrac{n+1}{2}\right).\label{eq:quarV2}
\end{align}
Finally,
\begin{align}\label{eq:quarV3}
I_3\ge 0.
\end{align}
Therefore, putting \eqref{eq:C1}, \eqref{eq:linearV}, \eqref{eq:linearV1}, \eqref{eq:linearV2}, \eqref{eq:quarV}, \eqref{eq:quarV1}, \eqref{eq:quarV2} and \eqref{eq:quarV3} together, we obtain
\begin{align*}
&C_1(n)\\
>&\frac{\omega_{n-2}(n-12)B(\frac{n-1}{2},\frac{n+1}{2})}{4n(n-1)(n-2)(n-3)} \\
&+\frac{8n^2(n+2)}{3}\left[\frac{9\omega_{n-2}B(\frac{n-1}{2},\frac{n+1}{2})}{4(n+1)^2n^3(n-1)(n-2)(n-3)}+\frac{3\omega_{n-2}B\left(\frac{n-1}{2},\frac{n+1}{2}\right)}{8(n+2)(n+1)^2n^3(n-1)^2}\right] \\
&+\frac{8n^3(n+2)}{3}\left[\frac{9\omega_{n-2}B(\frac{n-1}{2},\frac{n+1}{2})}{16(n+2)(n+1)^2n^4(n-1)(n-2)}+\frac{3\omega_{n-2}B(\frac{n-1}{2},\frac{n+1}{2})}{16n^4(n+2)^2(n+1)^2(n-1)}\right]\\
=&\omega_{n-2}B\left(\tfrac{n-1}{2},\tfrac{n+1}{2}\right)\frac{n^2-8n-5}{4(n+2)(n+1)(n-1)^2(n-3)}.
\end{align*}
Hence,
\begin{align*}
C_1(n)>0 \quad \mathrm{~~if~~}n \geq 9.
\end{align*}

\begin{remark}
Using Lemma \ref{lem:calculus}, one can check that the function
\[
\frac{x_{n+4}}{4n}|x+e_{n+4}|^{-n},\quad x\in\R^{n+4}_+,
\]
is a  \emph{supersolution} of \eqref{eq:subsolution1}, and thus,
\[
\widetilde V(x)\le \frac{x_{n+4}}{4n}|x+e_{n+4}|^{-n} \quad \mathrm{in~~}\overline {\R_+^{n+4}}.
\]
Therefore,
\begin{align*}
C_1(n)=&\frac{\omega_{n-2}(n-12)B(\frac{n-1}{2},\frac{n+1}{2})}{4n(n-1)(n-2)(n-3)}\\
&+\frac{8n^2(n+2)}{3}\int_{\Rn}x^2_n|x+e_n|^{-(n+4)}V(|x'|,x_n)x_1^4 \ud x \\
&+\frac{8n^3(n+2)}{3}\int_{\Rn}|x+e_n|^{-4}V^2(|x'|,x_n)x_1^4\ud x.\\
\le & \frac{(n-12)\omega_{n-2}B(\frac{n-1}{2},\frac{n+1}{2})}{4n(n-1)(n-2)(n-3)}\\
&+\frac{8n^2(n+2)}{3} \frac{9\omega_{n-2}B(\frac{n-1}{2},\frac{n+1}{2})}{16(n+1)n^3(n-1)(n-2)(n-3)}\\
&+\frac{8n^3(n+2)}{3}\frac{3\omega_{n-2}B(\frac{n-1}{2},\frac{n+1}{2})}{64(n+1)n^4(n-1)(n-2)}\\
=&\frac{(3n^2-11n-6)\omega_{n-2}B(\frac{n-1}{2},\frac{n+1}{2})}{8(n+1)n(n-1)(n-2)(n-3)}<0 \quad\mathrm{if~~}n\le 4.
\end{align*}
Hence, the best possible dimension one can achieve in this case under the present proof is $n\ge 5$.
\end{remark}

\subsection{Umbilic boundary in dimensions $7\le n\le 9$}
We first need to find a sub-solution of
\begin{align*}
\begin{cases}
\displaystyle -\D \widetilde \Lambda=x_{n+4}^2|x+e_{n+4}|^{-n-2}, &\quad \mathrm{in~~} \R^{n+4}_+, \\
\displaystyle  \widetilde \Lambda=0,   &\quad \mathrm{on~~} \pa \R^{n+4}_+.  
\end{cases}
\end{align*}
According to Lemma \ref{lem:calculus}, we will choose $\alpha=n$, and search for the solution of
\begin{equation}\label{eq:ode1um}
\begin{cases}
\alpha (n+2-\alpha) f(s) +2\alpha(1+s)f'(s)=s^2,\\
f(0)=0.
\end{cases}
\end{equation}
The solution of \eqref{eq:ode1um} is 
\[
\frac{s^3}{6n(1+s)},
\]
which is a convex function. So we have
\begin{align*}
-\D \left(\frac{x_{n+4}^3}{6n(1+x_{n+4})}|x+e_{n+4}|^{-n}\right)
   = x_{n+4}^2|x+e_{n+4}|^{-n-2}-\frac{1}{3n}\left(1-\frac{1}{(1+x_{n+4})^3}\right)|x+e_{n+4}|^{-n}.
   \end{align*}
Set
\[
\tilde u_2(x):= \widetilde \Lambda(x) - \frac{x_{n+4}^3}{6n(1+x_{n+4})}|x+e_{n+4}|^{-n},\quad x\in\overline{\R^{n+4}_+}.
\]
Then it becomes
\begin{align}\label{eq:u_2}
\begin{cases}
\displaystyle-\D \tilde u_2
   = \frac{1}{3n}\left(1-\frac{1}{(1+x_{n+4})^3}\right)|x+e_{n+4}|^{-n} &~~ \mathrm{in~~} \mathbb{R}_+^{n+4},\\
\displaystyle \tilde u_2=0, &~~ \mathrm{on~~} \pa \mathbb{R}_+^{n+4}.
\end{cases}
\end{align}
So it follows from the maximum principle that
\begin{align}\label{subsol:ineq:1}
\tilde u_2(x)=\widetilde \Lambda(x) - \frac{x_{n+4}^3}{6n(1+x_{n+4})}|x+e_{n+4}|^{-n}\geq 0 \quad \mathrm{in~~} \overline{\mathbb{R}_+^{n+4}}. 
\end{align}

Now we are going to give a better estimate of  $C_2(n)$ defined in \eqref{eq:C2}, which is
\begin{equation*}
\begin{split}
C_2(n):=&\frac{3(n-10)\omega_{n-2}B(\frac{n-1}{2},\frac{n+1}{2})}{2n(n-1)(n-2)(n-3)(n-4)(n-5)}\\
&+\frac{2n^2(n+2)}{3}\int_{\R^n_+}|x+e_n|^{-n-4}\Lambda(|x'|,x_n)x_n^3x_1^4\ud x \\
&+\frac{2n^3(n+2)}{3}\int_{\R^n_+}|x+e_n|^{-4}x_1^4\Lambda^2(|x'|,x_n)\ud x.
\end{split}
\end{equation*}

Let
\[
u_1(r,s)= \tilde u_1 (r,0,\cdots,0,s),
\]
and recall
\[
\Lambda(r,s)= \widetilde \Lambda(r,0,\cdots,0,s).
\]
We have
\begin{equation}\label{eq:linearL}
\begin{split}
& \int_{\R^n_+}|x+e_n|^{-n-4}\Lambda(|x'|,x_n)x_n^3x_1^4\ud x \\
 =&\frac{1}{6n}\int_{\R^n_+}|x+e_n|^{-2n-4}\frac{x_{n}^6}{1+x_{n}}x_1^4\ud x + \int_{\R^n_+}|x+e_n|^{-n-4}x_n^3x_1^4u_2(|x'|,x_n)\ud x.
\end{split}
\end{equation}
 On one hand, 
\begin{align}
\frac{1}{6n}\int_{\R^n_+}|x+e_n|^{-2n-4}\frac{x_{n}^6}{1+x_{n}}x_1^4\ud x =&\frac{\omega_{n-2}}{2n(n-1)(n+1)}\int_0^\infty\frac{x_n^6\ud x_n}{(1+x_n)^{n+2}}\int_0^\infty\frac{r^{n+2}\ud r}{(1+r^2)^{n+2}}\nonumber \\
=&\frac{45\omega_{n-2}B(\frac{n-1}{2},\frac{n+1}{2})}{(n+1)^2n^3(n-1)(n-2)(n-3)(n-4)(n-5)}.\label{eq:linearL1}
\end{align}
On the other hand,  if we let
$\tilde v_2$ be the solution (decay to zero at infinity) of
\begin{align*}
\begin{cases}
\displaystyle -\D \tilde  v_2=x_{n+4}^3|x+e_{n+4}|^{-n-4},& \mathrm{in~~} \R^{n+4}_+, \\
\displaystyle \tilde  v_2=0, & \mathrm{on~~} \pa \R^{n+4}_+,
\end{cases}
\end{align*}
and notice that $\tilde v_2$ is radial in the $x'=(x_1,\cdots,x_{n+3})$ variable, then it follows from Lemma \ref{lem:changeofvariables} and \eqref{eq:u_2} that
\begin{align*}
\int_{\Rn}x^3_n|x+e_n|^{-n-4}u_2(|x'|,x_n)x_1^4 \ud x=&\frac{1}{3n}\int_{\mathbb{R}^n_+} v_2(|x'|,x_n)\left(1-\frac{1}{(1+x_{n})^3}\right)|x+e_{n}|^{-n}x_1^4 \ud x,
\end{align*}
where 
\[
v_2(r,s)=\tilde v_2(r,0,\cdots,0,s).
\]

We will give a lower bound on $\tilde v_2$. According to Lemma \ref{lem:calculus}, we will choose $\alpha=n+2$,  and search for the solution of
\begin{equation}\label{eq:ode2um}
\begin{cases}
2\alpha(1+s)f'(s)=s^3,\\
f(0)=0.
\end{cases}
\end{equation}
The solution of \eqref{eq:ode2um} is 
\[
\frac{1}{2(n+2)}\left[\frac{(1+s)^3}{3}-\frac{3(1+s)^2}{2}+3(1+s)-\log(1+s)-\frac{11}{6}\right],
\]
which is a convex function. Hence, we have
\begin{align*}
-\Delta \left[\frac{1}{2(n+2)}h_1(x_{n+4})|x+e_{n+4}|^{-n-2}\right] \le x_{n+4}^3|x+e_{n+4}|^{-n-4}\quad\quad \mathrm{in~~} \R^{n+4}_+,
\end{align*}
where
\[
h_1(s)=\frac{(1+s)^3}{3}-\frac{3(1+s)^2}{2}+3(1+s)-\log(1+s)-\frac{11}{6}.
\]
Therefore, we have
\begin{align*}
\tilde v_2(x)\ge \frac{1}{2(n+2)}h_1(x_{n+4})|x+e_{n+4}|^{-n-2} \quad \mathrm{in~~}\overline{\R^{n+4}_+}.
\end{align*}
Hence,
\begin{align*}
&\int_{\Rn}x^3_n|x+e_n|^{-n-4}u_2(|x'|,x_n)x_1^4 \ud x\\
=&\frac{1}{3n}\int_{\mathbb{R}^n_+} v_2(|x'|,x_n)\left(1-\frac{1}{(1+x_{n})^3}\right)|x+e_{n}|^{-n}x_1^4 \ud x\\
\ge & \frac{1}{6n(n+2)} \int_{\mathbb{R}^n_+} h_1(x_{n})\left(1-\frac{1}{(1+x_{n})^3}\right)|x+e_{n}|^{-2n-2}x_1^4\ud x\\
\ge & \frac{1}{6n(n+2)}\frac{3\omega_{n-2}}{(n-1)(n+1)}\int_0^\infty h_1(x_n)\left(1-\frac{1}{(1+x_{n})^3}\right)(1+x_n)^{-n+1}\ud x_n \int_0^\infty\frac{r^{n+2}}{(1+r^2)^{n+1}}\ud r\\
= & \frac{1}{6n(n+2)}\frac{3\omega_{n-2}}{(n-1)(n+1)} \frac{n+1}{4n}B\left(\tfrac{n-1}{2},\tfrac{n+1}{2}\right) \int_0^\infty h_1(x_n)\left(1-\frac{1}{(1+x_{n})^3}\right)(1+x_n)^{-n+1}\ud x_n.
\end{align*}
It is elementary to calculate that
\begin{align*}
\int_0^\infty h_1(x_n)\left(1-\frac{1}{(1+x_{n})^3}\right)(1+x_n)^{-n+1}\ud x_n=\frac{18(5n^3-24n^2+51n-40)}{(n-5)(n-4)(n-3)(n-2)^2(n-1)n(n+1)^2}.
\end{align*}
Hence,
\begin{align}
&\int_{\Rn}x^3_n|x+e_n|^{-n-4}u_2(|x'|,x_n)x_1^4 \ud x\nonumber\\
\ge &\frac{\omega_{n-2}B\left(\tfrac{n-1}{2},\tfrac{n+1}{2}\right)}{4n^2(n+2)(n-1)} \frac{9(5n^3-24n^2+51n-40)}{(n-5)(n-4)(n-3)(n-2)^2(n-1)n(n+1)^2}.\label{eq:linearL2}
\end{align}

Lastly,
\begin{align}
 &\int_{\R^n_+}|x+e_n|^{-4}x_1^4\Lambda^2(|x'|,x_n)\ud x \nonumber\\
=&\int_{\R^n_+}|x+e_n|^{-4}x_1^4\left(\frac{1}{6n}\frac{x_{n}^3}{1+x_{n}}|x+e_{n}|^{-n}+u_2(|x'|,x_n)\right)^2\ud x \nonumber\\
=&\int_{\R^n_+}|x+e_n|^{-4}x_1^4\left(\frac{1}{6n}\frac{x_{n}^3}{1+x_{n}}|x+e_{n}|^{-n}\right)^2\ud x\nonumber \\
&+\frac{1}{3n}\int_{\R^n_+}|x+e_n|^{-n-4}x_1^4 \frac{x_{n}^3}{1+x_{n}}u_2(|x'|,x_n)\ud x + \int_{\R^n_+}|x+e_n|^{-4}x_1^4u_2^2(|x'|,x_n)\ud x \nonumber\\
=&II_1+II_2+II_3.\label{eq:quarL}
\end{align}
For $II_1$, we have
\begin{align}
II_1=&\frac{\omega_{n-2}}{12(n+1)n^2(n-1)}\int_0^\infty\frac{x_n^6 \ud x_n}{(1+x_n)^{n+3}}\int_0^\infty\frac{r^{n+2}\ud r}{(1+r^2)^{n+2}} \nonumber\\
=&\frac{15\omega_{n-2}B(\frac{n-1}{2},\frac{n+1}{2})}{2(n+2)(n+1)^2n^4(n-1)(n-2)(n-3)(n-4)}.\label{eq:quarL1}
\end{align}

For $II_2$, if we let $ \tilde w_1$ be the solution (decay to zero at infinity) of
\begin{align*}
\begin{cases}
\displaystyle -\D \tilde w_2=\frac{x_{n+4}^3}{1+x_{n+4}}|x+e_{n+4}|^{-n-4},& \mathrm{in~~} \R^{n+4}_+, \\
\displaystyle \tilde w_2=0, & \mathrm{on~~} \pa \R^{n+4}_+,
\end{cases}
\end{align*}
and notice that $\tilde w_2$ is radial in the $x'=(x_1,\cdots,x_{n+3})$ variable, then it follows from Lemma \ref{lem:changeofvariables} and \eqref{eq:u_2} that
 \begin{align*}
II_2=\frac{1}{9n^2}\int_{\mathbb{R}^n_+}w_2(|x'|,x_n)\left(1-\frac{1}{(1+x_{n})^3}\right)|x+e_{n}|^{-n}x_1^4\ud x,
\end{align*}
where 
\[
w_2(r,s)= \tilde w_2 (r,0,\cdots,0,s).
\]

We will give a lower bound estimate for $\tilde w_2$.  According to Lemma \ref{lem:calculus}, we will choose $\alpha=n+2$,  and search for the solution of
\begin{equation}\label{eq:ode3um}
\begin{cases}
2\alpha(1+s)f'(s)=\frac{s^3}{1+s},\\
f(0)=0.
\end{cases}
\end{equation}
The solution of \eqref{eq:ode3um} is 
\[
\frac{1}{2(n+2)}\left[\frac{(s+1)^2}{2}-3(s+1)+3\log (s+1)+\frac{1}{s+1}+\frac 32\right],
\]
which is a convex function. Hence, we have
\begin{align*}
- \D\left\{\frac{1}{2(n+2)}h_2(x_{n+4})|x+e_{n+4}|^{-n-2}\right\}  \le \frac{x_{n+4}^3}{1+x_{n+4}}|x+e_{n+4}|^{-n-4}\quad \mathrm{in~~} \R^{n+4}_+,
\end{align*}
where
\[
h_2(s):=\frac{(s+1)^2}{2}-3(s+1)+3\log (s+1)+\frac{1}{s+1}+\frac 32.
\]
By the maximum principle, we have
\[
\tilde w_2(x)\ge \frac{1}{2(n+2)} h_2(x_{n+4})|x+e_{n+4}|^{-n-2} \quad \mathrm{in~~}\overline{\R^{n+4}_+}.
\]
Thus,
\begin{align}
II_2\ge & \frac{1}{18n^2(n+2)}\int_{\R^n_+}|x+e_n|^{-2n-2}x_1^4 h_2(x_{n})\left(1-\frac{1}{(1+x_{n})^3}\right)\ud x\nonumber\\
\ge & \frac{1}{18n^2(n+2)} \frac{3\omega_{n-2}}{(n-1)(n+1)} \frac{n+1}{4n}B\left(\tfrac{n-1}{2},\tfrac{n+1}{2}\right)\nonumber\\
&\cdot  \int_0^\infty h_2(x_n)(1+x_n)^{-n+1}(1-(1+x_n)^{-3})\ud x_n\nonumber\\
=& \frac{3 (5n^3-13n^2+26n-16)\omega_{n-2}B\left(\tfrac{n-1}{2},\tfrac{n+1}{2}\right)}{4(n-4)(n-3)(n-2)^2(n-1)^2n^4(n+1)^2(n+2)^2}.\label{eq:quarL2}
\end{align}
Finally,
\begin{align}\label{eq:quarL3}
II_3\ge 0.
\end{align}
Therefore, putting \eqref{eq:C2}, \eqref{eq:linearL}, \eqref{eq:linearL1}, \eqref{eq:linearL2}, \eqref{eq:quarL}, \eqref{eq:quarL1}, \eqref{eq:quarL2} and \eqref{eq:quarL3} together, we obtain
\begin{align*}
C_2(n)>&\frac{3(n-10)\omega_{n-2}B(\frac{n-1}{2},\frac{n+1}{2})}{2n(n-1)(n-2)(n-3)(n-4)(n-5)}\\
&+\frac{30(n+2)\omega_{n-2}B(\frac{n-1}{2},\frac{n+1}{2})}{(n+1)^2n(n-1)(n-2)(n-3)(n-4)(n-5)}\\
&+ \frac{3(5n^3-24n^2+51n-40)\omega_{n-2}B\left(\tfrac{n-1}{2},\tfrac{n+1}{2}\right)}{2(n-5)(n-4)(n-3)(n-2)^2(n-1)^2n(n+1)^2}\\
&+\frac{10\omega_{n-2}B(\frac{n-1}{2},\frac{n+1}{2})}{2(n+1)^2n(n-1)(n-2)(n-3)(n-4)}\\
&+\frac{(5n^3-13n^2+26n-16)\omega_{n-2}B\left(\tfrac{n-1}{2},\tfrac{n+1}{2}\right)}{2(n-4)(n-3)(n-2)^2(n-1)^2n(n+1)^2(n+2)}\\
=&\frac{(3n^5-33n^4+106n^3-119n^2+59n-40)\omega_{n-2}B(\frac{n-1}{2},\frac{n+1}{2})}{2(n+1)^2n(n-1)^2(n-2)^2(n-3)(n-4)(n-5)}\\
&+\frac{(5n^3-13n^2+26n-16)\omega_{n-2}B\left(\tfrac{n-1}{2},\tfrac{n+1}{2}\right)}{2(n-4)(n-3)(n-2)^2(n-1)^2n(n+1)^2(n+2)}\\
=& \frac{(3n^3-24n^2+27n+34)\omega_{n-2}B\left(\tfrac{n-1}{2},\tfrac{n+1}{2}\right)}{2(n-5)(n-4)(n-3)(n-2)(n-1)^2(n+1)(n+2)}.
\end{align*}
Hence,
\begin{align*}
C_2(n)>0, \quad \mathrm{~~if~~}n \geq 7.
\end{align*}

\begin{remark}
According to \eqref{eq:estimateofleading2}, if $n\le 6$, then the dominant error term in \eqref{JX-volume expansion} will be of order $\e^4|\log\e|$, and thus, the right hand side of \eqref{JX-volume expansion} needs more delicate expansion in $\e$. Therefore, in this umbilic case, $n=7$ is the best one can do under the present proof.
\end{remark}

\end{document}